\newcommand{\Suff}{\textit{Suff}}
\newcommand{\Pref}{\textit{Pref}}
\newcommand{\Fact}{\textit{Fact}}
\renewcommand{\epsilon}{\varepsilon}
\newcommand{\St}{\textit{St}}
\newcommand{\oc}{\textit{oc}}
\newcommand{\wt}{\widetilde}
\begin{document}

\sloppy

\title{Open and Closed Prefixes of Sturmian Words}

\author{Alessandro De Luca \inst{1} \and Gabriele Fici \inst{2}}

\institute{DIETI, Universit\`a di Napoli Federico II, Italy\\ \email{alessandro.deluca@unina.it} \and Dipartimento di Matematica e Informatica, Universit\`a di Palermo, Italy\\ \email{gabriele.fici@unipa.it}}

\maketitle

\begin{abstract}
  A word is closed if it contains a proper factor that occurs both as a prefix and as a suffix but does not have internal occurrences, otherwise it is open. We deal with the sequence of open and closed prefixes of Sturmian words and prove that this sequence characterizes every finite or infinite Sturmian word up to isomorphisms of the alphabet. We then characterize the combinatorial structure of the sequence of open and closed prefixes of standard Sturmian words. We prove that every standard Sturmian word, after swapping its first letter, can be written as an infinite product of squares of reversed standard words.
\end{abstract}

\noindent \textbf{Keywords:} Sturmian word; closed word; standard word; central word; semicentral word.

\section{Introduction}
In a recent paper with M.~Bucci~\cite{BuDelFi13}, the authors dealt with trapezoidal words, also with respect to the property of being \emph{closed} (also known as periodic-like~\cite{CaDel01a}) or open. 
Factors of Sturmian words are the most notable example of trapezoidal words, and in fact the last section of~\cite{BuDelFi13}
showed the sequence of open and closed prefixes of the Fibonacci word, a famous characteristic Sturmian word.

In this paper we build upon such results, investigating the sequence of open and closed prefixes of Sturmian words in general, and in particular in the standard case. More precisely, we prove that the sequence $oc(w)$ of open and closed prefixes of a word $w$ (i.e., the sequence whose $n$-th element is $1$ if the prefix of length $n$ of $w$ is closed, or $0$ if it is open) characterizes every (finite or infinite) Sturmian word, up to isomorphisms of the alphabet. 

In~\cite{BuDelFi13}, we investigated the structure of the sequence $oc(F)$ of the Fibonacci word. We proved that the lengths of the runs (maximal subsequences of consecutive equal elements) in $oc(F)$ form the doubled Fibonacci sequence. We prove in this paper that this doubling property holds for every standard Sturmian word, and describe the sequence $oc(w)$ of a standard Sturmian word $w$ in terms of the \emph{semicentral} prefixes of $w$, which are the prefixes of the form $u_{n}xyu_{n}$, where $x,y$ are letters and $u_{n}xy$ is an element of the standard sequence of $w$. As a consequence, we show that the word $ba^{-1}w$, obtained from a standard Sturmian word $w$ starting with letter $a$ by swapping its first letter, can be written as the infinite product of the words $(u_{n}^{-1}u_{n+1})^{2}$, $n\ge 0$. Since the words $u_{n}^{-1}u_{n+1}$ are reversals of standard words, this induces an infinite factorization of $ba^{-1}w$ in squares of reversed standard words. 

Finally, we show how the sequence of open and closed prefixes of a standard Sturmian word of slope $\alpha$ is related to the continued fraction expansion of $\alpha$.

\section{Open and Closed Words}

Let us begin with some notation and basic definitions; for those not included below, we refer the reader to~\cite{BuDelFi13} and~\cite{LothaireAlg}.

Let $\Sigma=\{a,b\}$ be a $2$-letter alphabet. Let $\Sigma^{*}$ and $\widehat{\Sigma}^{*}$ stand respectively for the free monoid and the free group generated by $\Sigma$. Their elements are called \emph{words} over $\Sigma$. The \emph{length} of a word $w$ is denoted by $|w|$. The \emph{empty word}, denoted by $\epsilon$, is the unique word of length zero and is the neutral element of $\Sigma^{*}$ and $\widehat{\Sigma}^{*}$.
 
A \emph{prefix} (resp.~a \emph{suffix}) of a word $w$ is any word $u$ such that $w=uz$ (resp.~$w=zu$) for some word $z$. A \emph{factor} of $w$ is a prefix of a suffix (or, equivalently, a suffix of a prefix) of $w$.  An \emph{occurrence} of a factor $u$ in $w$ is a factorization $w=vuz$. An occurrence of $u$ is \emph{internal} if both $v$ and $z$ are non-empty. The set of prefixes, suffixes and factors of the word $w$ are denoted  by $\Pref(w)$, $\Suff(w)$ and $\Fact(w)$, respectively.
From the definitions, we have that $\epsilon$ is a prefix, a suffix and a factor of any word. 
A \emph{border} of a word $w$ is any word in $\Pref(w)\cap \Suff(w)$ different from $w$.

A factor $v$ of a word $w$ is \emph{left special in $w$} (resp.~\emph{right special in $w$}) if $av$ and $bv$ are factors of $w$ (resp.~$va$ and $vb$ are factors of $w$). A \emph{bispecial factor} of $w$ is a factor that is both left and right special.

The word $\wt{w}$ obtained by reading $w$ from right to left is called the \emph{reversal} (or \emph{mirror image}) of $w$. A \emph{palindrome} is a word $w$ such that $\wt{w}=w$.
In particular, the empty word is  a palindrome. 

We recall the definitions of open and closed word given in \cite{Fi11}:

\begin{definition}\label{def:closed}
A word $w$ is \emph{closed} if and only if it is empty or has a factor $v\neq w$ occurring exactly twice in $w$, as a prefix and as a suffix of $w$ (with no internal occurrences). A word that is not closed is called \emph{open}.
\end{definition}

The word $aba$ is closed, since its factor $a$ appears only as a prefix and as a suffix. The word $abaa$, on the contrary, is not closed. Note that for any letter $a\in \Sigma$ and for any $n>0$, the word $a^{n}$ is closed, $a^{n-1}$ being a factor occurring only as a prefix and as a suffix in it (this includes the special case of single letters, for which $n=1$ and $a^{n-1}=\epsilon$). 

More generally, any word that is a power of a shorter word is closed. Indeed, suppose that $w=v^{n}$ for a non-empty $v$ and $n>1$. Without loss of generality, we can suppose that $v$ is not a power itself. If $v^{n-1}$ has an internal occurrence in $w$, then there exists a proper prefix $u$ of $v$ such that $uv=vu$, and it is a basic result in Combinatorics on Words that two words commute if and only if they are powers of a same shorter word, 
in contradiction with our hypothesis on $v$.

\begin{remark}
The notion of closed word is equivalent to that of \emph{periodic-like} word \cite{CaDel01a}. A word $w$ is periodic-like if its longest repeated prefix is not right special.

The notion of closed word is also closely related to the concept of \emph{complete return} to a factor, as considered in \cite{GlJuWiZa09}. A complete return to the factor $u$ in a word $w$ is any factor of $w$ having exactly two occurrences of $u$, one as a prefix and one as a suffix. Hence, $w$ is closed if and only if it is a complete return to one of its factors; such a factor is clearly both the longest repeated prefix and the longest repeated suffix of $w$ (i.e., the longest border of $w$). 
\end{remark}
 
\begin{remark}\label{obs}
Let $w$ be a non-empty word over $\Sigma$. The following characterizations of closed words follow easily from the definition:
 
\begin{enumerate}
 \item the longest repeated prefix (resp.~suffix) of $w$ does not have internal occurrences in $w$, i.e., occurs in $w$ only as a prefix and as a suffix;
  \item the longest repeated prefix (resp.~suffix) of $w$ is not a right (resp.~left) special factor of $w$;
 \item $w$ has a border that does not have internal occurrences in $w$;
 \item the longest border of $w$ does not have internal occurrences in $w$.
\end{enumerate}

Obviously, the negations of the previous properties characterizate open words. In the rest of the paper we will use these characterizations freely and without explicit mention to this remark.
\end{remark}

We conclude this section with two lemmas on right extensions.

\begin{lemma}\label{lem:nbo}
 Let $w$ be a non-empty word over $\Sigma$. Then there exists at most one letter $x\in\Sigma$ such that $wx$ is closed.
\end{lemma}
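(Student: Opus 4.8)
The plan is to argue by contradiction. Suppose both $wa$ and $wb$ are closed, with $a\neq b$ and $w$ non-empty, so $|wa|=|wb|=|w|+1\ge 2$. By the characterization of closed words in Remark~\ref{obs}(3), $wa$ has a border $u$ with no internal occurrence in $wa$, and $wb$ has a border $v$ with no internal occurrence in $wb$. A first small step is to note that $u$ and $v$ are non-empty, since the empty word has internal occurrences in every word of length $\ge 2$. Being non-empty suffixes of $wa$ and $wb$ respectively, $u$ ends with the letter $a$ and $v$ ends with the letter $b$; and being proper prefixes of $wa$ and $wb$, both are prefixes of $w$. As $a\neq b$ we get $u\neq v$; as two prefixes of $w$ of equal length coincide, this forces $|u|\neq|v|$; and by the symmetry in $a$ and $b$ we may assume $|u|<|v|$, so that $u$ is a proper prefix of $v$.

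Next I would exploit $v$. Write $v=v'b$ with $|v'|=|v|-1$. Since $v$ is a suffix of $wb$, cancelling the final letter shows that $v'$ is a suffix of $w$, say $w=yv'$ with $|y|=|w|-|v'|=|w|-|v|+1$; here $|y|\ge 1$ because $v$ is a prefix of $w$ and hence $|v|\le|w|$. On the other hand, $u$ is a prefix of $v$ with $|u|\le|v|-1=|v'|$, so $u$ is in fact a prefix of $v'$. Combining the two facts, $u$ occurs in $w$ starting at position $|y|\ge 1$, and this occurrence ends at position $|y|+|u|-1\le|y|+|v'|-1=|w|-1$.

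That same occurrence of $u$ lies inside $wa$ (which has $w$ as a prefix): it starts at a position $\ge 1$, so a non-empty word precedes it, and it ends at a position $\le|w|-1$, so at least the final letter $a$ of $wa$ follows it. Hence $u$ has an internal occurrence in $wa$, contradicting the choice of $u$; this contradiction proves the claim. The one point that needs care is the length bookkeeping in the last two paragraphs: one must check \emph{simultaneously} that the exhibited occurrence of $u$ in $wa$ is not the prefix (its start is $\ge 1$) and not the suffix (it ends strictly before the last position), and it is precisely here that the hypothesis $|u|<|v|$ enters. Everything else is just unwinding the definitions together with Remark~\ref{obs}.
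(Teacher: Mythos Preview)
Your proof is correct and follows essentially the same route as the paper's: assume both $wa$ and $wb$ are closed, take borders of each without internal occurrences, compare them as prefixes of $w$, and find that the shorter one reappears inside the suffix copy of the longer one, yielding an internal occurrence. The only cosmetic difference is that the paper takes the \emph{longest} borders $va$ and $v'b$ (via Remark~\ref{obs}(4)) rather than arbitrary internal-occurrence-free borders (via Remark~\ref{obs}(3)), and is terser about the length bookkeeping you spell out carefully.
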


\begin{proof}
Suppose by contradiction that there exist $a,b\in\Sigma$ such
that both $wa$ and $wb$ are closed. Let $va$ and $v'b$ be the longest borders of $wa$ and $wb$, respectively. Since $va$ and $v'b$ are prefixes of $w$, one has that one is a prefix of the other. Suppose that $va$ is shorter than $v'b$. But then $va$ has an internal occurrence in $wa$ (that appearing as a prefix of the suffix $v'$) against the hypothesis that $wa$ is closed. \qed
\end{proof}

When $w$ is closed, then exactly one such extension is closed.
More precisely, we have the following (see also~\cite[Prop.~4]{CaDel01a}):

\begin{lemma}
 Let $w$ be a closed word. Then $wx$, $x\in \Sigma$, is closed if and only if $wx$ has the same period of $w$.
\end{lemma}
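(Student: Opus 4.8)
The plan is to recast both the hypothesis ``closed'' and the hypothesis ``same period'' in terms of borders, via the standard period--border correspondence: a word $z$ has period $q$ if and only if its prefix of length $|z|-q$ is a border of $z$, and consequently the minimal period of $z$ equals $|z|$ minus the length of its longest border. Since $w$ is closed, its longest border $u$ is also its longest repeated prefix and longest repeated suffix, $u$ has no internal occurrences in $w$, and the minimal period of $w$ is $p:=|w|-|u|$. I read ``period'' as ``minimal period'', and note at the outset that $wx$ cannot have a period smaller than $p$: such a period would also be a period of the prefix $w$ (which has length $>p$), contradicting minimality of $p$ for $w$. The case $|w|=1$, where $u=\epsilon$ and $p=1$, is immediate and can either be checked directly or absorbed into what follows, since the inequality $|b|<|u|$ driving the converse is then vacuous.

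For the ``if'' direction I would argue as follows. Assume $wx$ has period $p$. Then the prefix $b'$ of $wx$ of length $|wx|-p=|u|+1$ is a border of $wx$; since $|u|+1\le|w|<|wx|$, this $b'$ is just $u$ followed by the $(|u|+1)$-th letter of $w$, and it is a proper border. It remains to check that $b'$ has no internal occurrence in $wx$: an internal occurrence $wx=\alpha b'\beta$ with $\alpha,\beta$ non-empty would exhibit an occurrence of $u$ starting at position $|\alpha|\ge 1$ and ending before position $|w|$ (since $|\alpha|+|u|\le|wx|-2=|w|-1$), that is, an internal occurrence of $u$ in $w$, which is impossible. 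Hence $wx$ has a border without internal occurrences, so it is closed.

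For the ``only if'' direction, assume $wx$ is closed and let $bx$ be its longest border; since $|wx|\ge 2$ this border is non-empty, and being a suffix it ends with $x$, so it does have the stated shape. Then $b$ is simultaneously a prefix and a suffix of $w$, hence a border of $w$, so $|b|\le|u|$; the crux is to exclude $|b|<|u|$. Suppose $|b|<|u|$. Then $b$, being a prefix and a suffix of $w$ strictly shorter than $u$, is a prefix and a suffix of $u$, i.e.\ a border of $u$; in particular $b$ occurs inside $w$ as the prefix of the suffix-occurrence of $u$, namely starting at position $p=|w|-|u|$. Using that $w$ has period $p$, the letter of $w$ immediately following that occurrence of $b$ is the same as the one following the prefix $b$ of $w$, i.e.\ the $(|b|+1)$-th letter of $w$; and that letter equals $x$, because $bx$ is a prefix of $wx$. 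Therefore $bx$ occurs in $w$ --- hence in $wx$ --- starting at position $p\ge 1$, with $|u|-|b|\ge 1$ letters still to its right inside $wx$; this is an internal occurrence of $bx$ in $wx$, contradicting closedness. So $b=u$, and the minimal period of $wx$ is $|wx|-|bx|=|w|-|u|=p$, the period of $w$.

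I expect the bookkeeping in the ``only if'' direction to be the main obstacle: one has to locate exactly where the longest border $bx$ of $wx$ reappears, which requires combining, with care, three facts --- that $b$ is a border of $u$, that $w$ has period exactly $p$, and that the final letter $x$ of $bx$ coincides with the $(|b|+1)$-th letter of $w$ --- and then checking that the occurrence thus produced lies strictly between the first and the last positions of $wx$. The remaining steps are a routine unwinding of the definitions together with the period--border correspondence.
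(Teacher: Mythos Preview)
Your argument is correct. Both directions are sound: in the ``if'' direction you correctly produce the border $ux$ of $wx$ and show it has no internal occurrence by reducing to an internal occurrence of $u$ in $w$; in the ``only if'' direction your contradiction via $|b|<|u|$ works, and the bookkeeping (that the occurrence of $bx$ you locate at position $p$ is genuinely internal) checks out.

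Your route differs from the paper's. The paper does not prove the two implications separately; instead it identifies, from $w$ alone, the unique letter $x$ such that $vx$ is a prefix of $w$ (where $v$ is the longest border of $w$), and then argues both cases at once: for this $x$, $wx$ keeps the period and the border $vx$ is without internal occurrences, so $wx$ is closed; for the other letter $y$, the longest repeated prefix of $wy$ is still $v$, which is now right special (followed by $x$ as a prefix and by $y$ as a suffix), so $wy$ is open and its period changes. This structural shortcut makes the paper's proof a few lines long. Your approach, by contrast, is a direct unwinding of the period--border correspondence and handles the converse via an explicit contradiction on border lengths; it is longer but entirely self-contained and does not rely on the right-special characterization of openness. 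Either buys the lemma; the paper's version is terser, yours makes every inequality explicit.
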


\begin{proof}
Let $w$ be a closed word and $v$ its longest border; in particular, $v$ is the longest repeated prefix of $w$. Let $x$ be the letter following the occurrence of $v$ as a prefix of $w$. Clearly, $wx$ is has the same period as $w$, and it is closed as its border $vx$ cannot have internal occurrences. Conversely, if $y\neq x$ is a letter, then $wy$ has a different period and it is open as its longest repeated prefix $v$ is right special.
 \qed
\end{proof}

For more details on open and closed words and related results see~\cite{CaDel01a,BuDelDel09,Fi11,FiLi12,BuDelFi13}.

\section{Open and Closed Prefixes of Sturmian Words}

 Let $\Sigma^{\omega}$ be the set of (right) infinite words over $\Sigma$, indexed by $\mathbb{N}_{0}$.  An element of $\Sigma^{\omega}$ is a \emph{Sturmian word} if it contains exactly $n+1$ distinct factors of length $n$, for every $n\ge 0$. 
 A famous example of Sturmian word is the Fibonacci word \[F=abaababaabaababaababa\cdots\] 

If $w$ is a Sturmian word, then $aw$ or $bw$ is also a Sturmian word. A Sturmian word $w$ is \emph{standard} (or \emph{characteristic}) if $aw$ and $bw$ are both Sturmian words. The Fibonacci word is an example of standard Sturmian word. In the next section, we will deal specifically with standard Sturmian words. Here, we focus on finite factors of Sturmian words, called \emph{finite Sturmian words}. Actually, finite Sturmian words are precisely the elements of $\Sigma^{*}$ verifying the following balance property:  for any $u,v\in \Fact(w)$ such that $|u|=|v|$ one has $||u|_{a}-|v|_{a}|\le 1$ (or, equivalently, $||u|_{b}-|v|_{b}|\le 1$). 

We let $\St$ denote the set of finite Sturmian words. The language $\St$ is factorial (i.e., if $w=uv\in \St$, then $u,v\in \St$) and extendible (i.e., for every $w\in \St$ there exist letters $x,y\in \Sigma$ such that $xwy\in \St$).

We recall the following definitions given in \cite{DelMi94}. 

\begin{definition}
A word  $w\in \Sigma^{*}$ is a  left special (resp.~right special) Sturmian word if $aw,bw\in \St$ (resp.~if $wa,wb\in \St$). A bispecial Sturmian word is a Sturmian word that is both left special and right special. 
\end{definition}

For example, the word $w=ab$ is a bispecial Sturmian word, since $aw$, $bw$, $wa$ and $wb$ are all Sturmian. This example also shows that a bispecial Sturmian word is not necessarily a bispecial factor of some Sturmian word (see \cite{Fi12} for more details on bispecial Sturmian words).

\begin{remark}\label{rem:rsp}
It is known that if $w$ is a left special Sturmian word, then $w$ is a prefix of a standard Sturmian word, and the left special factors of $w$ are prefixes of $w$. Symmetrically, if $w$ is a right special Sturmian word, then the right special factors of $w$ are suffixes of $w$.
\end{remark}


We now define the sequence of open and closed prefixes of a word.

\begin{definition}
 Let $w$ be a finite or infinite word over $\Sigma$. We define the sequence $\oc(w)$ as the sequence whose $n$-th element is $1$ if the prefix of length $n$ of $w$ is closed, or $0$ otherwise.
\end{definition}
For example, if $w=abaaab$, then $\oc(w)=101001$.

In this section, we prove the following:

\begin{theorem}\label{theor:main}
 Every (finite or infinite) Sturmian word $w$ is uniquely determined, up to isomorphisms of the alphabet $\Sigma$, by its sequence of open and closed prefixes $\oc(w)$.
\end{theorem}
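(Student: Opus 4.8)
The plan is to show that $\oc(w)$ determines $w$ letter by letter, the only ambiguity being which letter is named $a$. We induct on the length of the prefix read so far. The first letter is fixed by the choice of alphabet isomorphism, so suppose we have correctly recovered the prefix $u$ of $w$ of length $n\ge 1$. Since $w$ is Sturmian, $u\in\St$, and as $\St$ is extendable the next letter $x$ of $w$ satisfies $ux\in\St$. If $u$ is not right special, i.e.\ only one of $ua,ub$ lies in $\St$, then $x$ is forced and $\oc(w)$ plays no role. Everything thus reduces to the case where $u$ is right special in $\St$, so $ua,ub\in\St$: here I claim that exactly one of $ua,ub$ is closed and the other open, so that the $(n{+}1)$-th entry of $\oc(w)$ pins down $x$ (it is the letter yielding the closed extension if that entry is $1$, and the letter yielding the open extension if it is $0$). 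Iterating recovers all of $w$; when $w$ is finite its length is $|\oc(w)|$, so finite and infinite Sturmian words are treated uniformly.

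By Lemma~\ref{lem:nbo} at most one of $ua,ub$ is closed, so the whole argument rests on the claim: \emph{if $u$ is a right special finite Sturmian word, then at least one of $ua,ub$ is closed.} I would first isolate a purely combinatorial reduction valid for any non-empty $u$. Writing $B$ for the longest border of $u$ (possibly empty), for a letter $y$ the longest border of $uy$ is $\Gamma y$, where $\Gamma$ is the longest border of $u$ with $\Gamma y\in\Pref(u)$ (and $uy$ is unbordered if there is no such $\Gamma$); moreover $uy$ is closed if and only if $\Gamma y$ has no occurrence in $u$ except the prefix one. Taking $y=x$, the $(|B|{+}1)$-th letter of $u$ — the one immediately following the prefix occurrence of $B$, so that $ux$ retains the smallest period of $u$ — one gets $\Gamma=B$, and the claim becomes: \emph{$Bx$ occurs in $u$ only as a prefix.}

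To establish this I would pass to reversals. Since $\St$ is closed under reversal, $\wt u$ is a left special Sturmian word, hence a prefix of a standard Sturmian word by Remark~\ref{rem:rsp}; and because closedness is preserved by reversal, with $\wt{ua}=a\wt u$ and $\wt{ub}=b\wt u$, the claim becomes a statement about the prefixes of standard words, where one can use the standard facts on central and standard Sturmian words~\cite{LothaireAlg}. If $u$ is itself closed the claim is immediate: by the second lemma of Section~2 the period-preserving extension $ux$ is then closed (and the other extension, having a larger period, is open). The real obstacle is the case where $u$ is open (for instance $u=aaba$ or $u=baa$): one must then rule out a second occurrence of $Bx$ inside $u$ — equivalently, show that every internal occurrence of $B$ in $u$ is followed by the letter other than $x$ — and I expect this to follow by decomposing the prefix $\wt u$ of a standard word along its standard prefixes and invoking the balance property of Sturmian words. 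The reduction steps above are routine; this last point is where the substantial work lies.
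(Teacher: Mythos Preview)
Your overall strategy---reduce to right special prefixes and show that exactly one of the two Sturmian extensions is closed---is exactly the paper's, and your use of Lemma~\ref{lem:nbo} for the ``at most one'' direction is correct. The gap is precisely where you say it is: the open case of the claim that $Bx$ has no non-prefix occurrence in $u$. But the detour through reversals, standard-word decompositions, and balance is unnecessary; you are missing a one-line argument that you essentially already have in hand. You cite Remark~\ref{rem:rsp} only for its first clause (left special Sturmian words are prefixes of standard words), but its second clause is the key: \emph{the right special factors of a right special Sturmian word are suffixes}. When $u$ is open, its longest repeated prefix $P$ is right special in $u$, hence a suffix of $u$ by that remark, hence a border; since $B$ is the longest border and every border is a repeated prefix, $P=B$. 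Now if $Bx$ had a non-prefix occurrence in $u$, it would be a repeated prefix strictly longer than $P$, a contradiction.

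This is exactly how the paper proceeds: it packages the observation as Lemma~\ref{lem:lsp} (the longest repeated prefix of a right special Sturmian word is a suffix), with the open case handled by Remark~\ref{rem:rsp} and the closed case by definition, and then Lemma~\ref{lem:speclo} follows in two lines without splitting into cases. Working with the longest repeated prefix directly (rather than the longest border) also streamlines things, since once you know it is a suffix the closedness of the period-preserving extension is immediate. Your proposed route via standard decompositions would presumably succeed, but it turns a two-line lemma into a substantial computation.
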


We need some intermediate lemmas.

\begin{lemma}\label{lem:lsp}
Let $w$ be a right special Sturmian word and $u$ its longest repeated prefix. Then $u$ is a suffix of $w$.
\end{lemma}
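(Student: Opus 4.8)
The plan is to prove the lemma by a dichotomy on whether $w$ is open or closed, reducing in each case the longest repeated prefix $u$ of $w$ to a factor already known to occur as a suffix. Throughout I will use that $u\neq w$ (since $w$ is not repeated in itself), so that $u$ is a proper prefix of $w$, together with the characterizations of open and closed words collected in Remark~\ref{obs} and the structural fact recalled in Remark~\ref{rem:rsp}.

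Suppose first that $w$ is \emph{closed}. Then, by Remark~\ref{obs}, the longest repeated prefix $u$ of $w$ has no internal occurrences, hence it occurs in $w$ as a suffix as well, so $u$ is a border of $w$; moreover $u$ is the \emph{longest} border, since any strictly longer border would itself be a repeated prefix strictly longer than $u$. In particular $u$ is a suffix of $w$, which is what we want. (This is exactly the identification of the longest repeated prefix with the longest border mentioned above in connection with complete returns.) Suppose now that $w$ is \emph{open}. By the characterization of openness in Remark~\ref{obs}(2), the longest repeated prefix $u$ of $w$ is right special in $w$, i.e.\ $ua,ub\in\Fact(w)$. Since $w$ is a right special Sturmian word, Remark~\ref{rem:rsp} guarantees that every right special factor of $w$ occurs as a suffix of $w$; applying this to $u$ concludes this case. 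The two cases together establish the statement, the degenerate case $u=\epsilon$ being trivial since $\epsilon$ is a suffix of every word.

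I do not expect a serious obstacle. The open case is an immediate consequence of Remarks~\ref{obs} and~\ref{rem:rsp}, and the only point demanding a little care is the closed case, namely the claim that for a closed word the longest repeated prefix coincides with the longest border: a repeated prefix strictly longer than the longest border would either be itself a longer border or would force an internal occurrence of the longest border, in both cases contradicting closedness (Remark~\ref{obs}(4)). As a self-contained alternative avoiding the dichotomy, one can argue directly: let $x$ be the letter with $ux\in\Pref(w)$; maximality of $u$ forbids $ux$ from being a repeated prefix, so any occurrence of $u$ other than the prefix occurrence is, if not a suffix occurrence, necessarily followed by the letter distinct from $x$, whence $u$ is right special in $w$ and Remark~\ref{rem:rsp} again forces $u$ to be a suffix. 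I would keep whichever of the two arguments reads more smoothly in the final version.
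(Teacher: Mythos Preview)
Your argument is correct and follows exactly the same dichotomy as the paper's proof: the closed case is handled via the definition/Remark~\ref{obs}, and the open case via right-specialness of the longest repeated prefix together with Remark~\ref{rem:rsp}. The only difference is that you spell out the closed case in more detail and offer an alternative uniform argument, whereas the paper dispatches each case in a single line.
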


\begin{proof}
If $w$ is closed, the claim follows from the definition of closed word. If $w$ is open, then $u$ is right special in $w$, and by
Remark~\ref{rem:rsp}, $u$ is a suffix of $w$.\qed
\end{proof}

\begin{lemma}\label{lem:speclo}
 Let $w$ be a right special Sturmian word.  Then $wa$ or $wb$ is closed.
\end{lemma}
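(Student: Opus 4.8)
The plan is to exhibit, for a suitable letter $x$, an explicit border of $wx$ with no internal occurrences. Assume $w$ is nonempty (if $w=\epsilon$ the claim is immediate, since both $a$ and $b$ are closed). Let $u$ be the longest repeated prefix of $w$. By Lemma~\ref{lem:lsp}, $u$ is a suffix of $w$. Since $w$ has only one occurrence in itself, $u$ is a \emph{proper} prefix of $w$, so $|u|<|w|$; let $x\in\Sigma$ be the letter such that $ux$ is a prefix of $w$. I claim that $wx$ is closed, with $ux$ as the witnessing border.

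Indeed, $ux$ is a prefix of $wx$ by construction, it is a suffix of $wx$ because $u$ is a suffix of $w$, and $ux\neq wx$ since $u\neq w$. The heart of the argument is that $ux$ has no internal occurrence in $wx$: the only position of $wx$ not belonging to the prefix $w$ is the last one, and an internal occurrence of $ux$ is followed by at least one letter of $wx$, hence such an occurrence must be entirely contained in $w$. Consequently $ux$ would occur in $w$ both as a prefix and at a later position, i.e.\ $ux$ would be a repeated prefix of $w$ strictly longer than $u$, contradicting the maximality of $u$. Therefore $ux$ occurs in $wx$ exactly twice — as a prefix and as a suffix — with no internal occurrence, and by Definition~\ref{def:closed} the word $wx$ is closed. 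In particular $wa$ or $wb$ is closed.

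I expect the only delicate point to be the bookkeeping in the second paragraph: making precise the length inequality that forces an internal occurrence of $ux$ to lie inside $w$, and checking the degenerate cases $u=\epsilon$ and $w=\epsilon$ (both of which are harmless, the former being absorbed by the general argument and the latter handled separately). Everything else follows directly from Lemma~\ref{lem:lsp} and Definition~\ref{def:closed}, and the case distinction between $w$ open and $w$ closed is not needed, since Lemma~\ref{lem:lsp} already covers both.
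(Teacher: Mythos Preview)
Your proof is correct and follows essentially the same route as the paper's: take $u$ the longest repeated prefix of $w$, use Lemma~\ref{lem:lsp} to see $u$ is a suffix, let $x$ be the letter after the prefix occurrence of $u$, and argue that $ux$ is a border of $wx$ with no internal occurrence (since such an occurrence would lie inside $w$ and make $ux$ a repeated prefix longer than $u$). The paper's argument is more terse but identical in substance; your explicit handling of $w=\epsilon$ and the length bookkeeping are harmless additions.
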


\begin{proof}
Let $u$ be the longest repeated prefix of $w$ and $x$ the letter following the occurrence of $u$ as a prefix of $w$. By Lemma \ref{lem:lsp}, $u$ is a suffix of $w$. Clearly, the longest repeated prefix of $wx$ is $ux$, which is also a suffix of $wx$ and cannot have internal occurrences in $wx$ otherwise the longest repeated prefix of $w$ would not be $u$. Therefore, $wx$ is closed.
\qed
\end{proof}

So, by Lemmas \ref{lem:nbo} and \ref{lem:speclo}, if $w$ is a right special Sturmian word, then one of $wa$ and $wb$ is closed and the other is open. This implies that the sequence of open and closed prefixes of a (finite or infinite) Sturmian word characterizes it up to exchange of letters. The proof of Theorem \ref{theor:main} is therefore complete.

\section{Standard Sturmian Words}

In this section, we deal with the sequence of open and closed prefixes of standard Sturmian words. In \cite{BuDelFi13} a characterization of the sequence $\oc(F)$  of open and closed prefixes of the Fibonacci word $F$ was given.

Let us begin by recalling some definitions and basic results about standard Sturmian words. For more details, the reader can see \cite{Be07} or \cite{LothaireAlg}.

Let $\alpha$ be an irrational number such that $0<\alpha<1$, and let $\left[0;d_{0}+1,d_{1},\ldots\right]$ be the continued fraction expansion of $\alpha$.
The sequence of words defined by $s_{-1}=b$, $s_{0}=a$ and $s_{n+1}=s_{n}^{d_{n}}s_{n-1}$ for $n\ge 0$, converges to the infinite word $w_{\alpha}$, called the \emph{standard Sturmian word of slope $\alpha$}. 
The sequence of words $s_{n}$ is called the \emph{standard sequence} of $w_{\alpha}$. 

Note that $w_{\alpha}$ starts with letter $b$ if and only if $\alpha>1/2$, i.e., if and only if $d_{0}=0$. In this case, $\left[0;d_{1}+1,d_{2},\ldots\right]$ is the continued fraction expansion of $1-\alpha$, and $w_{1-\alpha}$ is the word obtained from $w_{\alpha}$ by exchanging $a$'s and $b$'s. Hence, without loss of generality, we will suppose in the rest of the paper that $w$ starts with letter $a$, i.e., that $d_{0}>0$. 

For every $n\ge -1$, one has 
\begin{equation}\label{eq:su}
s_{n}=u_{n}xy, 
\end{equation}
for $x,y$ letters such that $xy=ab$ if $n$ is odd or $ba$ if $n$ is even. Indeed, the sequence $(u_{n})_{n\ge -1}$ can be defined by: $u_{-1}=a^{-1}$, $u_0=b^{-1}$, and, for every $n\ge 1$, 
\begin{equation}\label{eq:un+1}
u_{n+1}=(u_{n}xy)^{d_{n}}u_{n-1}\,,
\end{equation}
where $x,y$ are as in \eqref{eq:su}.

\begin{example}
 The Fibonacci word $F$ is the standard Sturmian word of slope $(3-\sqrt{5})/2$, whose continued fraction expansion is $[0;2,1,1,1,\ldots]$, so that $d_{n}=1$ for every $n\ge 0$. Therefore, the standard sequence of the Fibonacci word $F$ is the sequence defined by: $f_{-1}=b$, $f_{0}=a$, $f_{n+1}=f_{n}f_{n-1}$ for $n\ge 0$. This sequence is also called the sequence of \emph{Fibonacci finite words}.
\end{example}

\begin{definition}
 A standard word is a finite word belonging to some standard sequence. A central word is a word $u\in\Sigma^*$ such that $uxy$ is a standard word, for letters $x,y\in \Sigma$. 
\end{definition}

It is known that every central word is a palindrome%
. Actually, central words play a central role in the combinatorics of Sturmian words and have several combinatorial characterizations (see \cite{Be07} for a survey). For example, a word over $\Sigma$ is central if and only if it is a palindromic bispecial Sturmian word.

\begin{remark}
Let $(s_{n})_{n\ge -1}$ be a standard sequence. It follows by the definition that for every $k\ge 0$ and $n\ge -1$, the word $s_{n+1}^{k}s_{n}$ is a standard word. In particular, for every $n\ge -1$, the word $s_{n+1}s_n=u_{n+1}yxu_nxy$ is a standard word. Therefore, for every $n\ge -1$, we have that 
\begin{equation}\label{eq:u}
u_nxyu_{n+1}=u_{n+1}yxu_n
\end{equation} is a central word.
\end{remark}

The following lemma is a well known result (cf.~\cite{fisch}). 

\begin{lemma}\label{lem:pref}
Let $w$ be a standard Sturmian word and $(s_{n})_{n\ge -1}$ its standard sequence. Then:
\begin{enumerate}
 \item
A standard word $v$ is a prefix of $w$ if and only if $v=s_{n}^{k}s_{n-1}$, for some $n\ge 0$ and $k\le d_{n}$.
 \item  A central word $u$ is a prefix of $w$ if and only if $u=(u_{n}xy)^{k}u_{n-1}$, for some $n\ge 0$, $0<k\le d_{n}$, 
 and distinct letters $x,y\in \Sigma$ such that $xy=ab$ if $n$ is odd or $ba$ if $n$ is even.
\end{enumerate}
\end{lemma}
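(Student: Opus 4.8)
The plan is to prove Lemma~\ref{lem:pref} by exploiting the recursive structure of the standard sequence together with the known fact that prefixes of a standard Sturmian word of slope $\alpha$ are governed by the continued fraction expansion $[0;d_0+1,d_1,\ldots]$. First I would recall that the approximating words $s_n$ are themselves prefixes of $w$ for all $n\ge 0$ (this is immediate from $s_{n+1}=s_n^{d_n}s_{n-1}$ and $d_n\ge 1$, so $s_n$ is a prefix of $s_{n+1}$, hence of $w=\lim_k s_k$), and more generally each intermediate word $s_n^k s_{n-1}$ with $0\le k\le d_n$ is a prefix of $s_{n+1}$ and therefore of $w$. This gives the "if" direction of part~1 directly.

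For the "only if" direction of part~1, I would argue that the words of the form $s_n^k s_{n-1}$ (with $n\ge 0$, $0\le k\le d_n$) already exhaust all standard words: every standard word belongs to some standard sequence, but a standard word $v$ that is a prefix of $w$ must belong to the \emph{same} standard sequence $(s_n)$ as $w$, because the standard sequence of a standard Sturmian word is determined by its prefixes (two distinct standard sequences yield standard words that eventually disagree). Concretely, the standard words occurring as prefixes of $w$ form the chain $s_0 \prec s_0 s_{-1} \prec \cdots$ — but since $s_{-1}=b$ is not a prefix of $w$ (which starts with $a$), the correct chain is obtained by interleaving: between $s_n$ and $s_{n+1}$ the only standard words that are prefixes are $s_n, s_n^2 s_{-1}?\ldots$ — here I would instead invoke the standard characterization (e.g.\ from~\cite{LothaireAlg} or~\cite{fisch}) that the left special (equivalently, prefixes of a standard word that are standard) are exactly the $s_n^k s_{n-1}$, and note monotonicity in length forces any standard prefix to be of this shape. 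The main obstacle is making the "only if" direction airtight without simply citing it; the cleanest route is to show by induction on length that if $v$ is a standard word and a prefix of $w$, then writing $|v|$ between $|s_n^{k}s_{n-1}|$ and $|s_n^{k+1}s_{n-1}|$ for appropriate $n,k$ forces $v = s_n^{k}s_{n-1}$, using that a standard word has a unique expression via its own standard sequence and that $w$'s factor set of that length pins it down.

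Part~2 then follows from part~1 by the translation between $s_n$ and $u_n$ given in~\eqref{eq:su} and~\eqref{eq:un+1}. A word $u$ is central iff $uxy$ is standard for suitable letters $x,y$; by part~1, $uxy$ is a standard prefix of $w$ iff $uxy = s_m^{k}s_{m-1}$ for some $m\ge 0$, $k\le d_m$. The case $k=0$ gives $uxy=s_{m-1}$, i.e.\ $u=u_{m-1}$, which is already covered by taking $n=m-1$ and $k=d_{m-1}$ in the claimed form (using $u_{n+1}=(u_nxy)^{d_n}u_{n-1}$), so we may assume $k\ge 1$; then $uxy=s_m^{k}s_{m-1}=(u_m x'y')^{k} s_{m-1}$, and stripping the final two letters $xy$ (which coincide with the last two letters of $s_{m-1}=u_{m-1}xy$, with the right parity) yields $u=(u_m x'y')^{k} u_{m-1}$, which is exactly the claimed form with $n=m$. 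Conversely any such $u$ gives a standard prefix $uxy$ by~\eqref{eq:un+1} and part~1. I would write out the parity bookkeeping for $xy$ explicitly — checking $xy=ab$ when $n$ odd and $ba$ when $n$ even, consistently with~\eqref{eq:su} — since that is the only place a sign error could creep in; the structural content is entirely contained in part~1 and the defining recursions.
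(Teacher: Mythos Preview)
The paper does not give its own proof of this lemma: it is stated as ``a well known result (cf.~\cite{fisch})'' and left without argument. So there is no paper proof to compare your proposal against; you are supplying what the authors chose to cite.

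On the substance of your sketch: the ``if'' direction of part~1 is not quite as immediate as you suggest. Your argument that $s_n^k s_{n-1}$ is a prefix of $s_{n+1}$ for $k<d_n$ uses that $s_{n-1}$ is a prefix of $s_n$, which holds only for $n\ge 1$; at $n=0$ one has $s_0^k s_{-1}=a^k b$, and this is a prefix of $w=a^{d_0}b\cdots$ only when $k=d_0$. (The statement as written in the paper is slightly loose on this boundary case; you should either note the restriction or observe that $a^k b$ with $k<d_0$ is covered redundantly, if at all.) Your ``only if'' direction for part~1 is, as you yourself flag, the real content, and your sketch does not yet pin it down --- the phrase ``a standard word that is a prefix of $w$ must belong to the same standard sequence'' is exactly what needs proving, and the inductive length argument you outline would need to be written out.

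For part~2, your reduction to part~1 has a small gap: you pass from ``$u$ is a central prefix of $w$'' to ``$uxy$ is a standard prefix of $w$'', but this requires knowing that the two letters following $u$ in $w$ are distinct. This is true (both $uab$ and $uba$ are standard when $u$ is central, and exactly one of them continues $w$), but you should say it explicitly; otherwise the parity bookkeeping you mention cannot even get started.
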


Note that $(u_{n}xy)^{d_{n}+1}u_{n-1}$ is a central prefix of $w$, but 
this does not contradict the previous lemma since, by~\eqref{eq:un+1},
$(u_nxy)^{d_n+1}u_{n-1}=u_{n+1}yxu_n$.

Recall that a \emph{semicentral word} (see~\cite{BuDelFi13}) is a word in which the longest repeated prefix, the longest repeated suffix, the longest left special factor and the longest right special factor all coincide. It is known that a word $v$ is semicentral if and only if $v=uxyu$ for a central word $u$ and distinct letters $x,y\in \Sigma$. Moreover, $xuy$ is a factor of $uxyu$ and thus semicentral words are open, while central words are closed.

\begin{proposition}\label{prop:semi}
The semicentral prefixes of $w$ are precisely the words of the form $u_nxyu_n$, $n\ge 1$, where $x,y$ and $u_{n}$ are as in \eqref{eq:su}.
\end{proposition}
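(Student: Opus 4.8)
The plan is to prove the two inclusions separately, relying on the known characterization that a word is semicentral if and only if it has the form $uxyu$ for a central word $u$ and distinct letters $x,y$, together with Lemma~\ref{lem:pref} describing the central prefixes of $w$.

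First I would show that every word of the form $u_nxyu_n$ (with $n\ge 1$ and $x,y,u_n$ as in \eqref{eq:su}) is a prefix of $w$. Since $s_n=u_nxy$ is a standard word, by the first part of Lemma~\ref{lem:pref} it is a prefix of $w$; moreover $s_n^2=u_nxyu_nxy$ is also a prefix of $w$ (it equals $s_n^k s_{n-1}$ only in degenerate situations, but more directly $s_{n+1}=s_n^{d_n}s_{n-1}$ with $d_n\ge 1$, so when $d_n\ge 2$ we have $s_n^2$ as a prefix of $s_{n+1}$, and when $d_n=1$ one uses $s_{n+1}s_n$ or the identity \eqref{eq:u} to see $u_nxyu_{n+1}=u_{n+1}yxu_n$ is a central prefix, which contains $u_nxyu_n$ as a prefix). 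In any case $u_nxyu_n$ is a prefix of $w$, and since $u_n$ is central (as $s_n=u_nxy$ is standard) the word $u_nxyu_n$ is semicentral.

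Conversely, suppose $v$ is a semicentral prefix of $w$. Then $v=uxyu$ for some central word $u$ and distinct letters $x,y$. In particular $u$ is a central prefix of $w$, so by the second part of Lemma~\ref{lem:pref} we have $u=(u_mzz')^k u_{m-1}$ for suitable $m\ge 0$, $0<k\le d_m$, and letters $z,z'$; equivalently, by \eqref{eq:un+1} and the remark following Lemma~\ref{lem:pref}, $u$ is exactly one of the central prefixes $u_n$ of $w$ — that is, $u=u_n$ for some $n\ge 1$ (the case $n=0$ being excluded since $u_0=b^{-1}$ is not a genuine word; one checks the smallest semicentral prefix corresponds to $n=1$). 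It remains to identify the letters $x,y$: since $uxy=u_nxy$ must be a prefix of $w$ and $u_n$ is a right special Sturmian word whose two one-letter right extensions are $u_nxy$-compatible, the letter $x$ following $u=u_n$ in $w$ is forced, and then $y$ is the remaining letter of $\Sigma$ (using that $uxyu$, being a factor of the Sturmian word $w$, cannot have $x=y$, which also follows from semicentrality). Thus $v=u_nxyu_n$ with $x,y$ as in \eqref{eq:su}, completing the equality of the two sets.

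The main obstacle I anticipate is the bookkeeping in the converse direction: one must argue that a central prefix $u$ of $w$ for which $uxyu$ is \emph{also} a prefix is necessarily of the ``full'' form $u_n$ (rather than an intermediate power $(u_mzz')^k u_{m-1}$ with $k<d_m$), because only then does the required square fit as a prefix of $w$. Concretely, if $u=(u_mzz')^k u_{m-1}$ with $k<d_m$, then the next letter after this central prefix in $w$ is $z$ (continuing the block), so $uzz'u$ would force the factor $uzz'$ with $z'\neq z$ to occur, and one must check against the structure of $w$ — via Lemma~\ref{lem:pref} and the fact that the only way a central prefix extends to $u\cdot(\text{two new letters})\cdot u$ inside $w$ is when the central prefix is $u_n$ — that this is impossible unless $k=d_m$, i.e. unless $u=u_{m+1}$ after renaming. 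Handling this case analysis cleanly, together with the degenerate small-$n$ cases, is where the real work lies.
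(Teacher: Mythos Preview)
Your overall strategy matches the paper's, but the converse direction has a real gap. The sentence ``equivalently, \dots\ $u$ is exactly one of the central prefixes $u_n$'' is simply false: Lemma~\ref{lem:pref} lists \emph{all} words $(u_mzz')^k u_{m-1}$ with $0<k\le d_m$ as central prefixes of $w$, and only the extreme case $k=d_m$ collapses to some $u_{m+1}$. You acknowledge this in your obstacle paragraph, but the sketch there is wrong in detail and does not reach a contradiction. With $s_m=u_mzz'$, the two letters following $u=(u_mzz')^k u_{m-1}$ in $w$ are $z'z$, not $z$ first as you claim (apply \eqref{eq:u} at index $m-1$: $u_{m-1}z'zu_m=u_mzz'u_{m-1}$). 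The missing step, which the paper supplies, is the rewriting
\[
u\,z'z\,u=(u_mzz')^k u_{m-1}\cdot z'z\cdot (u_mzz')^k u_{m-1}
=(u_mzz')^{k+1}u_{m-1}\cdot zz'\cdot(u_mzz')^{k-1}u_{m-1},
\]
obtained by sliding that same identity once to the right. This forces the central prefix $(u_mzz')^{k+1}u_{m-1}$ to be followed in $w$ by $zz'$; but since $k+1\le d_m$, Lemma~\ref{lem:pref} says it is followed by $z'z$, a contradiction. Without this computation your proposal does not exclude the intermediate central prefixes, and nothing else in your outline does that job.

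A minor remark on the forward direction: your case split on $d_n$ is unnecessary. Since $u_n$ is always a prefix of $u_{n+1}=(u_nxy)^{d_n}u_{n-1}$, the word $u_nxyu_n$ is a prefix of the central prefix $u_nxyu_{n+1}$ of $w$ for every $n\ge 1$, which is exactly how the paper argues.
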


\begin{proof}
 Since $u_{n}$ is a central word, the word $u_nxyu_n$ is a semicentral word by definition, and it is a prefix of $u_nxyu_{n+1}=u_{n+1}yxu_{n}$, which in turn is a prefix of $w$ by Lemma \ref{lem:pref}.
 
 Conversely, assume that $w$ has a prefix of the form $u\xi\eta u$ for a central word $u$ and distinct letters $\xi,\eta\in \Sigma$. From Lemma \ref{lem:pref} and \eqref{eq:su}, we have that
 $$u\xi\eta u=(u_{n}xy)^{k}u_{n-1}\cdot \xi\eta \cdot (u_{n}xy)^{k}u_{n-1},$$
 for some $n\ge 1$, $k\le d_{n}$, and distinct letters $x,y\in \Sigma$ such that $xy=ab$ if $n$ is odd or $ba$ if $n$ is even. In particular, this implies that $\xi\eta=yx$.
 
If $k=d_{n}$, then $u=u_{n+1}yx u_{n+1}$, and we are done. So, suppose by contradiction that $k<d_{n}$. Now, on the one hand we have that $(u_{n}xy)^{k+1}u_{n-1}yx$ is a prefix of $w$ by Lemma \ref{lem:pref}, and so $(u_{n}xy)^{k+1}u_{n-1}$ is followed by $yx$ as a prefix of $w$; on the other hand we have
\begin{eqnarray*}
u\xi\eta u &=& (u_{n}xy)^{k}u_{n-1}\cdot yx \cdot (u_{n}xy)^{k}u_{n-1}  \\
&=& (u_{n}xy)^{k} \cdot u_{n-1}yxu_{n} xy \cdot(u_{n}xy)^{k-1}u_{n-1} \\
&=& (u_{n}xy)^{k} \cdot u_{n}xyu_{n-1} xy \cdot(u_{n}xy)^{k-1}u_{n-1} \\
&=& (u_{n}xy)^{k+1} \cdot u_{n-1}xy \cdot(u_{n}xy)^{k-1}u_{n-1},
\end{eqnarray*}
so that $(u_{n}xy)^{k+1}u_{n-1}$ is followed by $xy$ as a prefix of $w$, a contradiction.
 \qed
\end{proof}

The next theorem shows the behavior of the runs of open and closed prefixes in $w$ by determining the structure of the last elements of the runs.

\begin{theorem}
\label{thm:bdaries}
Let $vx$, $x\in \Sigma$, be a prefix of $w$. Then:
\begin{enumerate}
\item $v$ is open and $vx$ is closed if and only if there exists $n\ge 1$ such that $v=u_{n}xyu_{n}$;
\item $v$ is closed and $vx$ is open if and only if there exists $n\ge 0$ such that $v=u_{n}xyu_{n+1}=u_{n+1}yxu_{n}$.
\end{enumerate}
\end{theorem}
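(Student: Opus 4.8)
The plan is to settle the (easy) ``if'' directions first and then to show that the prefixes they produce are the \emph{only} positions at which a run of $\oc(w)$ ends. For the ``if'' part of item~1, suppose $v=u_nxyu_n$ with $n\ge 1$. Then $v$ is a semicentral prefix of $w$ by Proposition~\ref{prop:semi}, hence open. Since $d_n\ge1$, \eqref{eq:un+1} shows that $u_{n+1}$ begins with $u_nxy$, so the central prefix $u_nxyu_{n+1}$ of $w$ (Lemma~\ref{lem:pref}) begins with $u_nxyu_nx$; thus $vx\in\Pref(w)$, and, being a prefix of $u_nxyu_{n+1}$, the word $vx$ has period $|u_n|+2$ and longest border $u_nx$. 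As every occurrence of $u_n$ inside $u_nxyu_nx$ is immediately followed by $x$, this border has no internal occurrence, so $vx$ is closed. For the ``if'' part of item~2, suppose $v=u_nxyu_{n+1}=u_{n+1}yxu_n$ with $n\ge0$. Then $v$ is a central word (by \eqref{eq:u}) and a prefix of $w$, hence closed. Using $v=u_{n+1}yxu_n$, the identity $u_{n+1}=(u_nxy)^{d_n}u_{n-1}$, and the fact that $u_{n+1}yxu_{n+1}\in\Pref(w)$, one reads off that $w$ continues after $v$ with the letter $x$, so $vx\in\Pref(w)$; one then checks that the longest repeated prefix of $vx$ is $u_{n+1}$, which is right special in $vx$ (it is followed by $y$ in its prefix occurrence and by $x$ in its occurrence ending at position $|vx|$), so $vx$ is open.

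For the ``only if'' directions I would reduce to a statement about the runs of $\oc(w)$. By Lemma~\ref{lem:pref}(2), the central prefixes of $w$ are exactly the words $(u_nxy)^ku_{n-1}$ with $n\ge0$ and $0<k\le d_n$, a family containing every $u_nxyu_{n+1}=(u_nxy)^{d_n+1}u_{n-1}=u_{n+1}yxu_n$ (read with index $n+1$ and $k=1$); and by Proposition~\ref{prop:semi}, the semicentral prefixes of $w$ are exactly the words $u_nxyu_n$, $n\ge1$. So it is enough to prove that the maximal runs of $\oc(w)$ end precisely at the lengths
\[
|u_0xyu_1|\ <\ |u_1xyu_1|\ <\ |u_1xyu_2|\ <\ |u_2xyu_2|\ <\ |u_2xyu_3|\ <\ \cdots
\]
(strictly increasing because $|u_n|<|u_{n+1}|$), with the prefix of length $|u_nxyu_{n+1}|$ ending a run of closed prefixes and the prefix of length $|u_nxyu_n|$ ending a run of open prefixes. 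Granting this, a prefix $v$ with $vx\in\Pref(w)$ is open with $vx$ closed exactly when $|v|=|u_nxyu_n|$ for some $n\ge1$, forcing $v=u_nxyu_n$; and it is closed with $vx$ open exactly when $|v|=|u_nxyu_{n+1}|$ for some $n\ge0$, forcing $v=u_nxyu_{n+1}$.

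To establish the list, I would fix $n\ge0$ and treat two consecutive intervals. For $\ell$ in $\bigl(|u_nxyu_n|,\ |u_nxyu_{n+1}|\bigr]$ — the left endpoint read as $0$ when $n=0$ — the prefix $p$ of $w$ of length $\ell$ is a prefix of the central word $u_nxyu_{n+1}$, whose first $|u_n|+2$ letters form the standard, hence primitive, word $u_nxy=s_n$; the theorem of Fine and Wilf then forces $p$ to have minimal period exactly $|u_n|+2$, so its longest border is its prefix of length $\ell-|u_n|-2$, and a second application of Fine--Wilf together with the primitivity of $s_n$ shows that this border has no internal occurrence in $p$, so $p$ is closed. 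The lemma that a closed word remains closed after appending a letter precisely when its period is preserved (stated right after Lemma~\ref{lem:nbo}) then shows that the prefix of $w$ of length $|u_nxyu_{n+1}|+1$ is open, since $w$ continues after $u_nxyu_{n+1}=u_{n+1}yxu_n$ with the letter $x$ and not with the letter $y$ that would preserve the period $|u_n|+2$. For $\ell$ in $\bigl(|u_nxyu_{n+1}|,\ |u_{n+1}yxu_{n+1}|\bigr]$, the prefix $p$ of length $\ell$ contains $u_{n+1}$ both as a prefix and at position $|u_n|+2$ (where it appears as a suffix of the prefix $u_nxyu_{n+1}$ of $p$), these two occurrences being followed by $y$ and by $x$ respectively; using \eqref{eq:un+1} and the primitivity of standard words one checks that no prefix of $p$ longer than $u_{n+1}$ recurs in $p$, so $u_{n+1}$ is the longest repeated prefix of $p$ and it is right special in $p$, whence $p$ is open. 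Since the right endpoint of each interval of the first type is the left endpoint of one of the second type, and conversely, the runs of $\oc(w)$ end exactly at the displayed lengths.

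The step I expect to be the main obstacle is the pair of combinatorial claims used above: that inside $u_nxyu_nx$ every occurrence of $u_n$ is followed by $x$, and that in a prefix of $w$ of length at most $2|u_{n+1}|+2$ no prefix longer than $u_{n+1}$ can recur. I would prove both by combining the Fine--Wilf theorem with the primitivity of standard words and the recursion \eqref{eq:un+1}.
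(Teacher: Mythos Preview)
Your ``if'' directions are essentially those of the paper. For the converse directions you take a genuinely different route: the paper argues \emph{structurally}, showing that whenever $v$ is open and $vx$ is closed, the longest repeated prefix $u$ of $v$ is forced to be bispecial and maximal among special factors, so that $v$ is semicentral and Proposition~\ref{prop:semi} applies; item~2 then follows by sandwiching between consecutive semicentral prefixes. Your argument instead classifies \emph{every} prefix by length, proving closedness on $(|u_nxyu_n|,|u_nxyu_{n+1}|]$ and openness on $(|u_nxyu_{n+1}|,|u_{n+1}yxu_{n+1}|]$ via periods and Fine--Wilf. Both are valid; the paper's proof is shorter and uses only the definition of semicentral, while yours yields the run structure of $\oc(w)$ directly rather than as a corollary.

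There is one concrete slip. Your auxiliary claim ``inside $u_nxyu_nx$ every occurrence of $u_n$ is immediately followed by $x$'' is false: as the paper recalls, $xu_ny$ is a factor of the semicentral word $u_nxyu_n$, so $u_n$ does occur followed by $y$ (e.g.\ for $u_1=a$, $x=a$, $y=b$, in $aabaa$ the $a$ at position~$1$ is followed by $b$). What you actually need, and what your Fine--Wilf\,/\,primitivity sketch does prove, is the weaker statement that $u_nx$ has no \emph{internal} occurrence in $u_nxyu_nx$. In fact both of your ``obstacles'' dissolve once you use the defining property of semicentral words rather than Fine--Wilf: since the longest repeated prefix of $u_nxyu_n$ is $u_n$, the word $u_nx$ occurs only as a prefix there, hence only as prefix and suffix in $u_nxyu_nx$; and since the longest repeated prefix of $u_{n+1}yxu_{n+1}$ is $u_{n+1}$, the word $u_{n+1}y$ does not recur in it, so $u_{n+1}$ is the longest repeated prefix of any $p$ with $|u_nxyu_{n+1}|<|p|\le|u_{n+1}yxu_{n+1}|$. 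With this correction your proof goes through.
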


\begin{proof}
1. If $v=u_{n}xyu_{n+1}=u_{n+1}yxu_{n}$, then $v$ is semicentral and therefore open. The word $vx$ is closed since its longest repeated prefix $u_{n}x$ occurs only as a prefix and as a suffix in it.
 
Conversely, let $vx$ be a closed prefix of $w$ such that $v$ is open, and let $ux$ be the longest repeated suffix of $vx$. Since $vx$ is closed, $ux$ does not have internal occurrences in $vx$. Since $u$ is the longest repeated prefix of $v$ (suppose the longest repeated prefix of $v$ is a $z$ longer than $u$, then $vx$, which is a prefix of $z$, would be repeated in $v$ and hence in $vx$, contradiction) and $v$ is open, $u$ must have an internal occurrence in $v$ followed by a letter $y\neq x$. Symmetrically, if $\xi$ is the letter preceding the occurrence of $u$ as a suffix of $v$, since $u$ is the longest repeated suffix of $v$ one has that $u$ has an internal occurrence in $v$ preceded by a letter $\eta \neq \xi$. Thus $u$ is left and right special in $w$. Moreover, $u$ is the longest special factor in $v$. Indeed, if $u'$ is a left special factor of $v$, then $u$ must be a prefix of $u'$. But $ux$ cannot appear in $v$ since $vx$ is closed, and if $uy$ was a left special factor of $v$, it would be a prefix of $v$. Symmetrically, $u$ is  the longest right special factor in $v$. 
Thus $v$ is semicentral, and the claim follows from Proposition \ref{prop:semi}.
 
2. If $v=u_{n}xyu_{n+1}=u_{n+1}yxu_{n}$, then $v$ is a central word and therefore it is closed. Its longest repeated prefix is $u_{n+1}$. The longest repeated prefix of $vx$ is either $a^{d_0-1}$ (if $n=0$) or $u_{n}x$ (if $n>0$); in both cases, it has an internal occurrence as a prefix of the suffix $u_{n+1}x$. Therefore, $vx$ is open.
 
Conversely, suppose that $vx$ is any open prefix of $w$ such that $v$ is closed. If $vx=a^{d_0}b$, then $v=u_0xyu_1=u_1yxu_0$ and we are done. Otherwise, by 1), there exists $n\geq 1$ such that
$|u_n\xi y u_n|<|v|<|u_{n+1}y\xi u_{n+1}|$, where
$\{\xi,y\}=\{a,b\}$. We know that $u_n\xi y u_{n+1}$ is closed
and $u_n\xi y u_{n+1}\xi$ is open; it follows
$v=u_n\xi y u_{n+1}=u_nxyu_{n+1}$, as otherwise there should be in $w$ a semicentral prefix strictly between $u_nxyu_n$ and
$u_{n+1}yxu_{n+1}$.
 \qed
\end{proof}

Note that, for every $n\ge 1$, one has:
\begin{eqnarray*}
  u_{n+1}yxu_{n+1}&=&u_{n+1}yxu_{n}(u_{n}^{-1}u_{n+1})\\
 &=&u_{n}xyu_{n+1}(u_{n}^{-1}u_{n+1})\\
 &=&u_{n}xyu_{n}(u_{n}^{-1}u_{n+1})^{2}.
\end{eqnarray*}
Therefore, starting from an (open) semi-central prefix $u_{n}xyu_{n}$, one has a run of closed prefixes, up to the prefix $u_{n}xyu_{n+1}=u_{n+1}yxu_{n}=u_{n}xyu_{n}(u_{n}^{-1}u_{n+1})$, followed by a run of the same length of open prefixes, up to the prefix $u_{n+1}yxu_{n+1}=u_{n+1}yxu_{n}(u_{n}^{-1}u_{n+1})=u_{n}xyu_{n}(u_{n}^{-1}u_{n+1})^{2}$. See Table \ref{tab:example} for an illustration.

\begin{table}[ht]
\setlength{\tabcolsep}{10pt}
\begin{center}
\begin{tabular}{ l c l }
    prefix of $w$  &   open/closed & example  \\    \hline 
     $u_{n}xyu_{n}$           & open   & $aaba$        \\
     $u_{n}xyu_{n}x$          & closed   &    $aabaa$      \\   
     $u_{n}xyu_{n}xy$          & closed   & $aabaab$        \\
     \ldots      &   \ldots          & \ldots         \\
     $u_{n}xyu_{n+1}=u_{n+1}yxu_n$          & closed      & $aabaabaa$       \\
     $u_{n+1}yxu_{n}y$          & open   & $aabaabaaa$          \\
     $u_{n+1}yxu_{n}yx$           & open      & $aabaabaaab$       \\
	 \ldots      &   \ldots          & \ldots      \\
	 $u_{n+1}yxu_{n+1}$      & open   & $aabaabaaabaa$ \\
	 $u_{n+1}yxu_{n+1}y$        & closed   & $aabaabaaabaab$ \\
    \hline \\   
\end{tabular}
\end{center}
\caption{The structure of the prefixes of a standard Sturmian word $w=aabaabaaabaabaa\cdots$ with respect to the $u_{n}$ prefixes. Here $d_{0}=d_{1}=2$ and $d_{2}=1$.\label{tab:example}}
\end{table}

In Table \ref{tab:oc}, we show the first elements of the sequence $oc(w)$ for a standard Sturmian word $w=aabaabaaabaabaa\cdots$ of slope $\alpha=\left[0;3,2,1,\ldots\right]$, i.e., with $d_{0}=d_{1}=2$ and $d_{2}=1$. One can notice that the runs of closed prefixes are followed by runs of the same length of open prefixes. 

\begin{table}[ht]
\begin{small}
\begin{center}
\begin{tabular}{r*{15}{@{\hspace{1.4em}}c}}
 $n$    & 1 & 2 & 3 & 4 & 5 & 6 & 7 &
8 & 9 & 10 & 11 & 12 & 13 & 14 & 15 \\ \hline\\[-1ex]
 $w$    & $a$ & $a$ & $b$ & $a$ & $a$ & $b$ & $a$ &
$a$ & $a$ & $b$ & $a$ & $a$ & $b$ & $a$ & $a$ \\
\hline \\[-1ex]
$oc(w)$ & 1 & 1 & 0 & 0 & 1 & 1 & 1 & 1 & 0 & 0 & 0 & 0 & 1 & 1 & 1\\[0.5ex]
\hline \\
\end{tabular}
\end{center}
\end{small} 
\caption{\label{tab:oc}The sequence $oc(w)$ of open and closed prefixes for the word  $w= aabaabaaabaabaa\cdots$}
\end{table}

The words $u_{n}^{-1}u_{n+1}$ are reversals of standard words, for every $n\ge 1$. Indeed, let $r_n=\wt{s_n}$ for every $n\geq -1$, so that $r_{-1}=b$, $r_0=a$, and
$r_{n+1}=r_{n-1}r_n^{d_n}$ for $n\geq 0$. Since by \eqref{eq:su} $s_{n}=u_{n}xy$ and $s_{n+1}=u_{n+1}yx$, one has $r_n=yxu_{n}$ and $r_{n+1}=xyu_{n+1}$, and therefore, by \eqref{eq:u},
\begin{equation}\label{eq:ur}
 u_{n}r_{n+1}=u_{n+1}r_{n}.
\end{equation}
Multiplying \eqref{eq:ur}  on the left by $u_{n}^{-1}$ and on the right by $r_{n}^{-1}$, one obtains
\begin{equation}\label{eq:prop}
 r_{n+1}r_n^{-1}=u_n^{-1}u_{n+1}.
\end{equation}
Since $r_{n+1}=r_{n-1}r_{n}^{d_{n}}$, one has that $r_{n+1}r_{n}^{-1}=r_{n-1}r_{n}^{d_{n}-1}$, and therefore $r_{n+1}r_{n}^{-1}$ is the reversal of a standard word. By \eqref{eq:prop}, $u_n^{-1}u_{n+1}$ is the reversal of a standard word.

Now, note that for $n=0$, one has $u_{0}xyu_{1}=u_{1}yxu_{0}=a^{d_{0}}$ and $(u_{0}^{-1}u_{1})=ba^{d_{0}-1}$. Thus, we have the following:

\begin{theorem}\label{theor:decomp}
Let $w$ be the standard Sturmian word of slope $\alpha$, with $0<\alpha<1/2$, and let $[0;d_{0}+1,d_{1},\ldots]$, with $d_{0}>0$, be the continued fraction expansion of $\alpha$. The word $ba^{-1}w$ obtained from $w$ by swapping the first letter can be written as an infinite product of squares of reversed standard words in the following way:
$$ba^{-1}w=\prod_{n\ge 0}(u_{n}^{-1}u_{n+1})^{2},$$ 
where $(u_{n})_{n\ge -1}$ is the sequence defined in \eqref{eq:su}.

In other words, one can write
$$w=a^{d_{0}}ba^{d_{0}-1}\prod_{n\ge 1}(u_{n}^{-1}u_{n+1})^{2}.$$
\end{theorem}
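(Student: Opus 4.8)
The plan is to iterate the identity displayed immediately before the statement, which converts the step from one semicentral prefix of $w$ to the next into a right multiplication by a square. For $n\ge 1$ set $v_n=u_nxyu_n$, the semicentral prefix of index $n$ given by Proposition~\ref{prop:semi}, with $x,y$ as in \eqref{eq:su}. Reversing the ordered pair $xy$ attached to index $n$ produces exactly the ordered pair attached to index $n+1$, so the word $u_{n+1}yxu_{n+1}$ appearing in that computation is precisely $v_{n+1}$; hence the displayed chain of equalities, which relies only on \eqref{eq:u} and \eqref{eq:un+1}, says $v_{n+1}=v_n\,(u_n^{-1}u_{n+1})^2$ for every $n\ge 1$. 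Since each $u_n^{-1}u_{n+1}$ is a genuine word over $\Sigma$ --- the reversal of a standard word, by \eqref{eq:prop} and the remark following it --- this is an honest factorization in $\Sigma^*$ and not merely an identity in $\widehat{\Sigma}^*$.

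First I would unwind this recursion: an immediate induction gives $v_m=v_1\prod_{k=1}^{m-1}(u_k^{-1}u_{k+1})^2$ for every $m\ge 1$, the product taken left to right in increasing order of $k$. Next I would dispose of the initial term by direct computation. As $d_0>0$, one has $u_1=a^{d_0-1}$, and the pair attached to $n=1$ is $xy=ab$, so $v_1=a^{d_0-1}\,ab\,a^{d_0-1}=a^{d_0}ba^{d_0-1}$; moreover $u_0^{-1}u_1=b\,a^{d_0-1}$, whence $ba^{-1}v_1=(ba^{d_0-1})^2=(u_0^{-1}u_1)^2$. Substituting, $ba^{-1}v_m=\prod_{k=0}^{m-1}(u_k^{-1}u_{k+1})^2$ for all $m\ge 1$.

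Finally I would pass to the limit. Each $v_m$ is a prefix of $w$ by Proposition~\ref{prop:semi}, and $|v_m|=2|u_m|+2=2|s_m|-2$ tends to infinity because $|s_m|\to\infty$; thus the $v_m$ are prefixes of $w$ of unbounded length and so converge to $w$, hence $ba^{-1}v_m\to ba^{-1}w$. This yields $ba^{-1}w=\prod_{n\ge 0}(u_n^{-1}u_{n+1})^2$. The other form follows by splitting off the factor $(u_0^{-1}u_1)^2=(ba^{d_0-1})^2$ and left multiplying by $ab^{-1}$, giving $w=a^{d_0}ba^{d_0-1}\prod_{n\ge 1}(u_n^{-1}u_{n+1})^2$.

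There is no genuinely hard step here: the substance lies in Theorem~\ref{thm:bdaries} and Proposition~\ref{prop:semi} (which describe the run structure and identify $u_{n+1}yxu_{n+1}$ as the semicentral prefix following $v_n$) and in the displayed computation, all already established. What does require care is bookkeeping --- keeping track of the parity of $n$ so as to know which of $ab$, $ba$ plays the role of $xy$ at each index, and checking that the free-group manipulations indeed produce words of $\Sigma^*$, which the paragraph preceding the statement has already verified.
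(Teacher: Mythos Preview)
Your proposal is correct and follows exactly the route the paper takes: the paper does not isolate a formal proof environment for this theorem, but the argument is precisely the recursion $u_{n+1}yxu_{n+1}=u_nxyu_n(u_n^{-1}u_{n+1})^2$ displayed just before the statement, together with the $n=0$ computation $u_0^{-1}u_1=ba^{d_0-1}$ and the observation (via \eqref{eq:prop}) that each $u_n^{-1}u_{n+1}$ is a reversed standard word. Your write-up simply makes explicit the induction over $m$ and the passage to the limit that the paper leaves to the reader.
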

 
\begin{example}
Take the Fibonacci word. Then, $u_{1}=\epsilon$, $u_{2}=a$, $u_{3}=aba$, $u_{4}=abaaba$, $u_{5}=abaababaaba$, etc. So, $u_{1}^{-1}u_{2}=a$, $u_{2}^{-1}u_{3}=ba$, $u_{3}^{-1}u_{4}=aba$, $u_{4}^{-1}u_{5}=baaba$, etc. Indeed, $u_{n}^{-1}u_{n+1}$ is the reversal of the Fibonacci finite word $f_{n-1}$. By Theorem \ref{theor:decomp}, we have:
\begin{eqnarray*}
 F &=& ab\prod_{n\ge 1}(u_{n}^{-1}u_{n+1})^{2}\\
 &=& ab\prod_{n\ge 0}(\wt{f_{n}})^{2}\\
 &=& ab\cdot (a\cdot a)(ba \cdot ba)(aba \cdot aba)(baaba\cdot baaba)\cdots
\end{eqnarray*}
i.e., $F$ can be obtained by concatenating $ab$ and the squares of the reversals of the Fibonacci finite words $f_n$ starting from $n=0$.

Note that $F$ can also be obtained by concatenating the reversals of the Fibonacci finite words $f_n$ starting from $n=0$:
\begin{eqnarray*}
 F &=& \prod_{n\ge 0} \wt{f_{n}}\\
 &=& a \cdot ba \cdot aba \cdot baaba \cdot ababaaba \cdots
\end{eqnarray*}
and also by concatenating $ab$ and the Fibonacci finite words $f_n$ starting from $n=0$:
\begin{eqnarray*}
 F &=& ab\prod_{n\ge 0} f_{n}\\
 &=& ab \cdot a \cdot ab \cdot aba \cdot abaab \cdot abaababa \cdots
\end{eqnarray*}
\end{example}

One can also characterize the sequence of open and closed prefixes of a standard Sturmian word $w$ in terms of the  directive sequence of $w$.

Recall that the \emph{continuants} of an integer sequence $(a_n)_{n\geq 0}$ are defined as
$K\left[\ \, \right]=1$, $K\left[a_0\right]=a_0$, and, for every $n\geq 1$,
\[K\left[a_0,\ldots,a_n\right]=a_nK\left[a_0,\ldots,a_{n-1}\right]+K\left[a_0,\ldots,a_{n-2}\right].\] 
Continuants are related to continued fractions, as the $n$-th convergent of
$[a_0; a_1,a_2,\ldots]$ is equal to $K\left[a_0,\ldots, a_n\right]/K\left[a_1,\ldots, a_n\right]$.

Let $w$ be a standard Sturmian word and $(s_n)_{n\ge -1}$ its standard sequence. Since $|s_{-1}|=|s_0|=1$ and, for every $n\ge 1$,
$|s_{n+1}|=d_n|s_n|+|s_{n-1}|,$ then one has, by definition, that for every $n\geq 0$
\[|s_n|=K\left[1,d_0,\ldots, d_{n-1}\right].\]

For more details on the relationships between continuants and Sturmian words see \cite{dL13}.

By Theorems~\ref{thm:bdaries} and~\ref{theor:decomp}, all prefixes up to
$a^{d_0}$ are closed; then all prefixes from $a^{d_0}b$ till $a^{d_0}ba^{d_0-1}$ are open,
then closed up to $a^{d_0}ba^{d_0-1}\cdot u_1^{-1}u_2$, open again up to 
$a^{d_0}ba_{d_0-1}\cdot (u_1^{-1}u_2)^2$, and so on.
Thus, the lengths of the successive runs of closed and open prefixes are: 
$d_0$, $d_0$, $|u_2|-|u_1|$, $|u_2|-|u_1|$, $|u_3|-|u_2|$, $|u_3|-|u_2|$, etc. 
Since $d_0=K\left[1,d_0-1\right]$ and, for every $n\geq 1$,
\[\begin{split}
 |u_{n+1}|-|u_n|&= |s_{n+1}|-|s_n|=(d_n-1)|s_n|+|s_{n-1}|\\
 &=K\left[1,d_0,\ldots,d_{n-1},d_n-1\right],
\end{split}\]
we have the following:

\begin{corollary}\label{cor:formula}
Let $w$ and $\alpha$ be as in the previous theorem and let, for every $n\geq 0$, $k_n=K\left[1,d_0,\ldots,d_{n-1},d_n-1\right]$. Then
\[oc(w)=\prod_{n\geq 0}1^{k_n}0^{k_n}.\]
\end{corollary}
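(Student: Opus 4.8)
The plan is to assemble Corollary~\ref{cor:formula} directly from the structural facts already established in this section, so that the only real work is bookkeeping with continuants. The starting point is the paragraph immediately preceding the corollary: Theorems~\ref{thm:bdaries} and~\ref{theor:decomp} together tell us exactly where the runs of $oc(w)$ begin and end. By Theorem~\ref{theor:decomp} we have $w=a^{d_0}ba^{d_0-1}\prod_{n\ge 1}(u_n^{-1}u_{n+1})^2$, and by Theorem~\ref{thm:bdaries} the transitions open$\to$closed and closed$\to$open happen precisely at the semicentral prefixes $u_nxyu_n$ and the central prefixes $u_nxyu_{n+1}=u_{n+1}yxu_n$. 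Combining these, as the displayed computation $u_{n+1}yxu_{n+1}=u_nxyu_n(u_n^{-1}u_{n+1})^2$ shows, the prefix lengths partition into consecutive blocks: a closed run from just after $u_nxyu_n$ up to $u_{n+1}yxu_n$, of length $|u_n^{-1}u_{n+1}|=|u_{n+1}|-|u_n|$, immediately followed by an open run of the same length, up to $u_{n+1}yxu_{n+1}$. The base case $n=0$ is handled separately: prefixes up to $a^{d_0}$ are closed and those from $a^{d_0}b$ to $a^{d_0}ba^{d_0-1}$ are open, giving a first closed run of length $d_0$ and a first open run of length $d_0$.

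So the first thing I would do is record the conclusion of that discussion cleanly: $oc(w)=1^{\ell_0}0^{\ell_0}1^{\ell_1}0^{\ell_1}\cdots$ where $\ell_0=d_0$ and $\ell_n=|u_{n+1}|-|u_n|$ for $n\ge 1$. This is exactly what the paragraph before the corollary asserts, citing Theorems~\ref{thm:bdaries} and~\ref{theor:decomp}; I would just make explicit that the runs genuinely alternate closed/open with no run of length zero, which follows because each $|u_{n+1}|-|u_n|>0$ (indeed $s_{n+1}$ is strictly longer than $s_n$ since $d_n\ge 1$, and $|u_{n+1}|-|u_n|=|s_{n+1}|-|s_n|$).

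Next I would translate the run lengths into continuants. Using $s_n=u_nxy$ we get $|u_n|=|s_n|-2$ for $n\ge -1$, hence $|u_{n+1}|-|u_n|=|s_{n+1}|-|s_n|$; then from the recurrence $|s_{n+1}|=d_n|s_n|+|s_{n-1}|$ we obtain $|s_{n+1}|-|s_n|=(d_n-1)|s_n|+|s_{n-1}|$ for $n\ge 1$. Now I invoke $|s_n|=K[1,d_0,\dots,d_{n-1}]$ (established just above in the excerpt) together with the defining recurrence of continuants $K[a_0,\dots,a_m]=a_mK[a_0,\dots,a_{m-1}]+K[a_0,\dots,a_{m-2}]$: reading this with $a_m=d_n-1$ gives $(d_n-1)K[1,d_0,\dots,d_{n-1}]+K[1,d_0,\dots,d_{n-2}]=K[1,d_0,\dots,d_{n-1},d_n-1]$, i.e.\ $|s_{n+1}|-|s_n|=K[1,d_0,\dots,d_{n-1},d_n-1]=k_n$ for $n\ge 1$. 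For $n=0$ we check directly $k_0=K[1,d_0-1]=d_0-1+\,?$; here I need to be a little careful with the conventions: $K[1,d_0-1]=(d_0-1)\cdot K[1]+K[\,]=(d_0-1)\cdot 1+1=d_0$, matching $\ell_0=d_0$. Substituting $\ell_n=k_n$ for all $n\ge 0$ into $oc(w)=\prod_{n\ge 0}1^{\ell_n}0^{\ell_n}$ yields $oc(w)=\prod_{n\ge 0}1^{k_n}0^{k_n}$, which is the claim.

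The main obstacle is not conceptual but notational: getting the index offsets right. There are three competing indexings to reconcile — the standard-sequence index $n$ in $s_n$ and $u_n$, the continued-fraction digits $d_n$ (with the shift coming from writing the expansion as $[0;d_0+1,d_1,\dots]$), and the continuant arguments, which start with a leading $1$ because $|s_0|=|s_{-1}|=1$. I would therefore spend the bulk of the proof pinning down the $n=0$ boundary case and verifying $k_0=d_0$ under the stated continuant conventions, since that is the spot where an off-by-one error is easiest to make; everything for $n\ge 1$ is the single clean application of the continuant recurrence described above.
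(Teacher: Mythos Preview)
Your proposal is correct and follows essentially the same route as the paper: identify the successive run lengths of $oc(w)$ from Theorems~\ref{thm:bdaries} and~\ref{theor:decomp} as $d_0,d_0,|u_2|-|u_1|,|u_2|-|u_1|,\ldots$, then rewrite $|u_{n+1}|-|u_n|=(d_n-1)|s_n|+|s_{n-1}|$ as the continuant $K[1,d_0,\ldots,d_{n-1},d_n-1]$ and check the base case $K[1,d_0-1]=d_0$. Your extra care about positivity of the run lengths and the index offsets is a welcome elaboration, but there is no substantive difference from the paper's argument.
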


\section*{Acknowledgments}

We thank an anonymous referee for helpful comments that led us to add the formula in Corollary \ref{cor:formula} to this final version. We also acknowledge the support of the PRIN 2010/2011 project ``Automi e Linguaggi Formali: Aspetti Matematici e Applicativi'' of the Italian Ministry of Education (MIUR).

\bibliographystyle{splncs}

\end{document}